\documentclass[a4paper]{amsart}

\usepackage{xcolor}

\def\PP{{\mathbb{P}}}

\def\QQ{{\mathbb{Q}}}

\newcommand{\oo}{{\mathcal O}}

\newcommand{\spec}{{\mathrm{Spec \ }}}

\newcommand{\m}{{\mathfrak{m}}}

\newcommand{\supp}{{\mathrm{supp}}}

\newcommand{\tP}{{\widetilde{P}}}

\newcommand{\ch}{{\rm char}}

\newtheorem{thm}{Theorem}[section]
\newtheorem{cor}[thm]{Corollary}
\newtheorem{Corollary-Definition}[thm]{Corollary-Definition}
\newtheorem{prop-def}[thm]{Proposition-Definition}

\newtheorem{lem}[thm]{Lemma}

\newtheorem{conj}[thm]{Conjecture}

\theoremstyle{definition}
\newtheorem{defn}[thm]{Definition}

\newtheorem{rem}[thm]{Remark}

\newtheorem{Proposition-Definition}[thm]{Proposition-Definition}

\begin{document}

\title[Vanishings]{Kawamata-Viehweg and Lipman's vanishings \\ for  sequences of \\ blow-ups at regular closed points}

\thanks{Mathematical Subject Classification 2020: 14B05,14E18, 14J17\\
Key words: singularities in arbitrary characteristic, Kawamata-Viehweg Vanishing Theorem \\
The author is partially supported by Grant-In-Aid (c) 22K03428 of JSPS 
and  by the Research Institute for Mathematical Sciences, an International Joint Usage/Research Center located in Kyoto University.
 }

\author{Shihoko Ishii }

\begin{abstract} The birational Kawamata-Viehweg vanishing theorem (KVV for short) is known to hold in considerable generality in equal characteristic zero, but fails in general in positive characteristic. We study a vanishing conjecture of Lipman for sequences of blow-ups at closed points of the spectrum of a regular local ring. We prove the conjecture when the ring has dimension at most three. We also establish a relative Kawamata-Viehweg vanishing theorem for exceptional nef \(\mathbf Q\)-divisors when all blow-up centers are rational over the residue field and the positive residue characteristic is at least the dimension of the ring minus one. Both results allow mixed characteristic.
\end{abstract}

\maketitle

\section{Introduction}
\noindent
Let $R$ be a regular local ring, and put $A_0:=\operatorname{Spec}R$.
Set $N:=\dim A_0$. Consider a sequence of blow-ups at closed points
$$
A_n\xrightarrow{\varphi_n}A_{n-1}\longrightarrow\cdots
\longrightarrow A_1\xrightarrow{\varphi_1}A_0,
\qquad
\varphi=\varphi_1\circ\cdots\circ\varphi_n.
$$
For each $i$, let $P_i\in A_{i-1}$ be the center of the blow-up $\varphi_i:A_i\to A_{i-1}$, and $E_i\subset A_i$ its exceptional divisor.
The proper transform of $E_i$ on $A_n$ is also denoted by $E_i$.
For each $i$, choose a canonical divisor $K_{A_i}$ compatibly, starting with the zero
divisor on $A_0$ and using the canonical
divisor formula for a blow-up at a closed point. 

\begin{defn}[Lipman]  Let $I\subset R$ be an ideal.
We say that $I$ is \emph{finitely supported} 
if there is a sequence as above such that
$$
I\mathcal{O}_{A_n}=\mathcal{O}_{A_n}(-D),
\qquad D=\sum a_iE_i,\qquad a_i\geq 0.
$$

\end{defn}

We consider the following conjecture:
\begin{conj}[Lipman's Vanishing Conjecture]
Let $I\subset R$ be a finitely supported ideal, and let $D$ be as above. 
Then, the following holds: 
$$R^q\varphi_*\mathcal{O}_{A_n}(K_{A_n}-D)=0
\qquad\text{for }q>0.
$$
\end{conj}

If  $R$ is  a $\QQ$-algebra,  the conjecture holds by Kawamata-Viehweg Vanishing theorem, as 
$-D$ is $\varphi$-nef ( \cite{mu}).

In positive or mixed characteristic, however,
 it is not obvious,
because KVV fails in general.

In \cite{lip}, Lipman asserted that his Theorem 2.1 proves the conjecture, and 
based on the vanishing in the conjecture he studied properties of finitely supported ideals.
However, his Theorem 2.1 is incorrect (see his comment \cite{lip2}).
This does not, however, disprove the conjecture. 
In this paper we will verify the conjecture for the following cases.
In both cases, mixed characteristic is allowed.

\begin{enumerate}
\item $N:=\dim R\leq 3$,
\item 
The residue field $k$ of $R$  
 satisfies $\dim R-1\leq p:=\ch k$, and
each blow-up center $P_i$ is a $k$-rational point.

\end{enumerate}

\noindent
{\bf Acknowledgement.} The author is grateful to Professor Joseph Lipman for his helpful correspondence and constructive comments. 
In particular, his suggestions led to an improvement of Theorem \ref{kvv}.

\vskip.5truecm
\section{The conjecture when $\dim R\leq 3$}   
\noindent
In this section we prove the conjecture when $\dim R\leq 3$.
First we give a definition.

\begin{defn}
Let $\varphi:Y\longrightarrow X$ be a proper birational morphism from
a regular scheme $Y$ to a normal affine scheme $X=\operatorname{Spec}B$.
Denote the dualizing sheaf of $Y$ by $\omega_Y$.
We say that \emph{Grauert--Riemenschneider vanishing}, or
\emph{G--R vanishing}, holds for $\varphi$ if
$$
R^q\varphi_*\omega_Y=0\qquad\text{for }q>0.
$$
\end{defn}

\begin{lem}\label{dim2}
Let $\varphi:Y\longrightarrow X$ be a resolution of singularities of normal scheme
$X=\operatorname{Spec}B$, where $\dim X=2$.
Then G--R vanishing holds for $\varphi$.
\end{lem}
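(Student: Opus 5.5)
The plan is the classical Lipman argument for two-dimensional excellent normal schemes, which works without any characteristic assumption. First I reduce to the case where $B$ is local. The sheaves $R^q\varphi_*\omega_Y$ are coherent and supported on the finitely many singular points of $X$ (away from them $\varphi$ is an isomorphism). We may therefore localize, and then complete if convenient, at a closed point. Since the fibers of $\varphi$ have dimension at most one, $R^q\varphi_*=0$ for $q\ge 2$ by the theorem on formal functions. So only $R^1\varphi_*\omega_Y=0$ needs proof.

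For $R^1$ I use the theorem on formal functions again:
\[
(R^1\varphi_*\omega_Y)^\wedge=\varprojlim_Z H^1(Z,\omega_Y\otimes\oo_Z),
\]
where $Z$ runs over effective divisors supported on the exceptional locus $E=\bigcup E_j$. It suffices to show $H^1(Z,\omega_Y\otimes\oo_Z)=0$ for every such $Z>0$. By Grothendieck duality on the one-dimensional projective scheme $Z$ over the residue field (or over the finite field extension obtained by taking global sections), together with adjunction $\omega_Z=\omega_Y(Z)\otimes\oo_Z$, this $H^1$ is dual to
\[
H^0(Z,\oo_Z(Z))=H^0\bigl(Z,\oo_Y(Z)|_Z\bigr).
\]
So the claim reduces to $H^0(Z,\oo_Z(Z))=0$ for all $Z>0$ exceptional.

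For that vanishing I argue by induction on $Z$, using negative definiteness of the intersection matrix $(E_i\cdot E_j)$, which holds for any proper birational morphism of a regular surface onto a normal surface. Given $Z>0$, negative definiteness gives $Z^2<0$. Hence there is a component $E_j\le Z$ with $Z\cdot E_j<0$: otherwise $Z^2=\sum z_j (Z\cdot E_j)\ge 0$. Set $Z'=Z-E_j$. We have the exact sequence
\[
0\to\oo_{E_j}(Z')\otimes\oo_Y(Z)\otimes\oo_Y(-Z')\to\oo_Z(Z)\to\oo_{Z'}(Z)\to0 .
\]
More precisely, $0\to\oo_{E_j}(Z-Z')\otimes\oo(Z')\to\cdots$, which is the standard sequence $0\to\oo_{E_j}(-Z')\to\oo_Z\to\oo_{Z'}\to0$ twisted by $\oo(Z)$. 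The kernel is then $\oo_{E_j}(Z-Z')=\oo_{E_j}(E_j)\otimes\oo(Z')$. Its degree on the integral curve $E_j$ is $Z\cdot E_j<0$, so it has no sections.

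Choosing the component differently ($E_j$ with $Z\cdot E_j<0$ removed so that the remaining piece is $\oo_{Z'}(Z)$), I still need $H^0(Z',\oo_{Z'}(Z))=0$. This does not follow directly from the induction hypothesis, because the twist is $Z$ rather than $Z'$. The cleanest fix is to prove the stronger statement: $H^0(Z,\oo_Z(L))=0$ whenever $L$ is a divisor with $L\cdot E_i\le Z\cdot E_i$ in an appropriate sense. Alternatively, I can use Lipman's ordering, a computation sequence $Z=Z_0>Z_1>\dots$ where each step removes a component $E_j$ with $Z_k\cdot E_j<0$, and track $\oo(Z)$ restricted to $E_j$ having degree $Z\cdot E_j\le Z_k\cdot E_j+\ldots$.

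I expect this bookkeeping to be the main obstacle. The first thing I would try is the reformulation via Lipman's lemma: $H^0(Z,\oo_Z(Z))\ne0$ would give a nonzero section $s$. Take $Z''\le Z$ minimal such that $s$ comes from $\oo_{Z''}(Z)$, with its divisor of zeros. This forces an effective exceptional divisor $W$ with $W\cdot E_i\ge0$ for all $i$ and $W\neq0$, which contradicts negative definiteness.
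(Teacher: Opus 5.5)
Your reduction is the standard route to the result of Lipman that the paper simply cites. You localize at the finitely many points over which $\varphi$ is not an isomorphism (these need not be singular points of $X$), dispose of $q\ge 2$ by fiber dimension, and use formal functions plus duality on the exceptional divisors $Z$ to reduce to $H^0(Z,\oo_Z(Z))=0$ for all exceptional $Z>0$. Two points need more care. First, a non-reduced $Z$ is in general only proper over the Artinian ring $\oo_{X,x}/\mathfrak m_x^m$ ($m\gg0$), which in mixed characteristic need not contain a field; so the duality you want is Grothendieck duality over that ring, with Matlis duals in place of vector-space duals. Second, $\omega_Y$ must be normalized compatibly with $X$ so that $\omega_Z\cong\omega_Y(Z)|_Z$.

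The genuine gap is that you never prove $H^0(Z,\oo_Z(Z))=0$. You filtered $\oo_Z$ the wrong way round. With $Z'=Z-E_j$ you put $E_j$ in the kernel and $Z'$ in the quotient, which leaves the term $H^0(Z',\oo_{Z'}(Z))$, and nothing in your induction controls it. The kernel there is $\oo_{E_j}(Z-Z')=\oo_{E_j}(E_j)$, of degree $E_j^2$ rather than $Z\cdot E_j$. It has no sections, but for a different reason than you state. Your last paragraph breaks down for the same reason: $\oo_{Z''}(Z)$ is a quotient of $\oo_Z(Z)$, so a section of $\oo_Z(Z)$ cannot ``come from'' it.

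The fix needs neither a strengthened statement nor a computation sequence. Use instead $0\to\oo_Y(-E_j)/\oo_Y(-Z)\to\oo_Z\to\oo_{E_j}\to0$, i.e.\ $0\to\oo_{Z'}(-E_j)\to\oo_Z\to\oo_{E_j}\to0$, and twist by $\oo_Y(Z)$:
\[
0\to\oo_{Z'}(Z')\to\oo_Z(Z)\to\oo_{E_j}(Z)\to0 .
\]
Choose $E_j\le Z$ with $Z\cdot E_j<0$, which you correctly obtain from negative definiteness. Then the quotient is an invertible sheaf of negative degree on the integral proper curve $E_j$, hence has no nonzero sections. Thus $H^0(Z',\oo_{Z'}(Z'))\cong H^0(Z,\oo_Z(Z))$, and induction on $\sum z_i$, starting at $Z'=0$, closes with the same self-twist at every step.

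Your ``minimal'' idea is the same argument in contrapositive form, once it is phrased with subsheaves. For $s\neq0$, choose $Z_1\le Z$ minimal with $s\in H^0(Z_1,\oo_{Z_1}(Z_1))$, viewed inside $H^0(Z,\oo_Z(Z))$ via $\oo_Y(Z_1)/\oo_Y\subset\oo_Y(Z)/\oo_Y$. Minimality makes the image of $s$ in $H^0(E_j,\oo_{E_j}(Z_1))$ nonzero for each $E_j\le Z_1$. So $Z_1\cdot E_j\ge0$ for all $j$, and $Z_1^2\ge0$, contradicting negative definiteness. With this step supplied, your proposal becomes a self-contained proof of what the paper obtains by citation.
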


\begin{proof}
This lemma follows from \cite[Theorem (2.3), (2.4)]{lip78}.

\end{proof}

\begin{lem}\label{g-r}
Let $R$ be a regular local ring, and let
$\varphi:A_n\longrightarrow A_0=\operatorname{Spec}R$ be a composition
of blow-ups at closed points. Then G--R vanishing holds for $\varphi$
in every dimension $N\geq 1$.
\end{lem}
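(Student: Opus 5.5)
Since $R^q$ of a composition can be controlled by the Leray spectral sequence, I will prove the statement for a single blow-up and then bootstrap, using the fact that $\omega$ behaves well under blow-ups. The plan is induction on $n$, with the key input being the case $n=1$.

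\emph{The single blow-up.} Let $\psi:Y=\mathrm{Bl}_P X\to X$ be the blow-up of a regular scheme $X$ of dimension $N$ at a regular closed point $P$, with exceptional divisor $E\cong\PP^{N-1}_{k(P)}$. We have $\omega_Y=\psi^*\omega_X\otimes\oo_Y((N-1)E)$ and $\oo_E(E)=\oo_{\PP^{N-1}}(-1)$. By the projection formula, it suffices to show
$$R^q\psi_*\oo_Y(jE)=0\quad (q>0),\qquad \psi_*\oo_Y(jE)=\oo_X \quad\text{for } 0\le j\le N-1.$$
I will argue by induction on $j$, using the sequences
$$0\to\oo_Y((j-1)E)\to\oo_Y(jE)\to\oo_E(-j)\to0.$$
For $j=0$, the blow-up of a regular point satisfies $R^q\psi_*\oo_Y=0$ and $\psi_*\oo_Y=\oo_X$: this follows either from the standard computation via the theorem on formal functions with the graded pieces $\oo_E(m)$, $m\ge0$, or by noting that $Y$ is the Proj of the Rees algebra. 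For $1\le j\le N-1$, we have $H^q(\PP^{N-1},\oo(-j))=0$ for all $q$, because the only nonvanishing cohomology of negative twists is $H^{N-1}$ for $j\ge N$. Hence the terms coincide step by step. This is characteristic-free and holds in mixed characteristic, since it only uses cohomology of $\PP^{N-1}$ over the field $k(P)$.

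\emph{Induction.} Write $\varphi=\varphi'\circ\varphi_n$ with $\varphi':A_{n-1}\to A_0$. The single-blow-up case applied to $\varphi_n$ gives $R^q\varphi_{n*}\omega_{A_n}=0$ for $q>0$ and $\varphi_{n*}\omega_{A_n}=\omega_{A_{n-1}}$. The Leray spectral sequence then yields $R^q\varphi_*\omega_{A_n}=R^q\varphi'_*\omega_{A_{n-1}}$, which vanishes for $q>0$ by induction. The case $N=1$ is trivial, since all maps are isomorphisms.

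The main obstacle is being careful that $\omega_{A_i}$ (the dualizing sheaf) agrees with $\oo(K_{A_i})$ and satisfies the blow-up formula in this generality, including mixed characteristic and a possibly nonperfect residue field. Regularity of each $A_i$ makes it Gorenstein, and the formula $\omega_Y=\psi^*\omega_X((N-1)E)$ can be checked by computing $\omega$ of $E$ by adjunction, which forces the coefficient $N-1$. If needed, I would instead use duality for $\psi$ directly: $\psi^!\oo_X$ is computed locally from the standard affine charts of the blow-up of a regular system of parameters.
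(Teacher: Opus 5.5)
Your proof is correct, but it takes a different route from the paper's. The paper does not factor $\varphi$. It quotes that regular local rings are rational, so $\mathbf{R}\varphi_*\oo_{A_n}\simeq\oo_{A_0}$. Grothendieck duality, in the form of Lipman's lemma, then turns this into $\mathbf{R}\varphi_*\omega_{A_n}\simeq\mathbf{R}\mathcal{H}om(\mathbf{R}\varphi_*\oo_{A_n},\omega_{A_0})\simeq\omega_{A_0}$ for the whole composite at once.

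You instead treat one blow-up at a time. You use the explicit formula $\omega_Y\simeq\psi^*\omega_X\otimes\oo_Y((N-1)E)$ and the filtration by the sheaves $\oo_Y(jE)$. The key input is that $\oo_{\PP^{N-1}}(-j)$ has no cohomology for $1\le j\le N-1$, and Leray then chains the steps together.

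What each approach buys:
\begin{enumerate}
\item The paper's argument is shorter and conceptual: G--R vanishing is the dual of rationality. It would apply verbatim to any proper birational map from a Cohen--Macaulay scheme over which rationality is known. The cost is reliance on two substantial external results.
\item Your argument is elementary and visibly characteristic-free, needing neither excellence nor a perfect residue field. Your $j=0$ step is exactly the special case of rationality that the paper cites, proved directly by formal functions. You also obtain the extra statement $\mathbf{R}\psi_*\oo_Y(jE)\simeq\oo_X$ for $0\le j\le N-1$.
\end{enumerate}

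The price of your route is that you must identify the dualizing sheaf through the blow-up formula. Your adjunction sketch does this correctly once you spell out two points. First, $\omega_Y\otimes\psi^*\omega_X^{-1}$ is trivialized off $E$, so it equals $\oo_Y(aE)$. Second, passing through $E\to P\hookrightarrow X$ gives $\omega_E\simeq\omega_{\PP^{N-1}_{k(P)}/k(P)}\simeq\oo(-N)$, because $\operatorname{Ext}^N_{\oo_{X,P}}(k(P),\omega_{X,P})\simeq k(P)$; this forces $a=N-1$. In any case, the paper itself takes this formula for granted when it fixes the canonical divisors $K_{A_i}$.
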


\begin{proof}
It is known that regular implies rational (see, for example, \cite[Section 4]{lt})
Therefore, \[
\mathcal O_{A_0}\xrightarrow{\ \sim\ }
\mathbf R\varphi_*\mathcal O_{A_n}
\]
By \cite[Lemma 4.2]{lipCM} we  obtain 
\[
\mathbf R\varphi_*\omega_{A_n}\simeq\omega_{A_0}.
\]

\end{proof}

\begin{thm}\label{dim3}
Lipman's vanishing conjecture holds if $\dim R\leq3$.
\end{thm}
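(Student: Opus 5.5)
We want $R^q\varphi_*\mathcal O_{A_n}(K_{A_n}-D)=0$ for $q>0$ when $N\le 3$. The cases $N\le 1$ are trivial: then $\varphi$ is an isomorphism, or $A_0$ is a point. The plan is to reduce to Grauert--Riemenschneider vanishing on suitable schemes of dimension at most two. Lemma \ref{g-r} handles the term $\omega_{A_n}=\mathcal O_{A_n}(K_{A_n})$, and Lemma \ref{dim2} handles two-dimensional normal schemes. To pass from $\omega_{A_n}$ to $\omega_{A_n}(-D)$, we use the ideal itself. Since $I\mathcal O_{A_n}=\mathcal O_{A_n}(-D)$ is invertible and $-D$ is $\varphi$-generated, a general element $f\in I$ (or a general combination of generators, after a harmless faithfully flat extension $R\to R[t]_{\mathfrak m R[t]}$ to make the residue field infinite) gives
\[
\operatorname{div}(f\circ\varphi)=D+H,
\]
where $H$ is the strict transform of $V(f)$. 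We may choose $f$ so that $H$ meets no point of the base-point-free locus badly: $H$ contains no stratum of $\bigcup E_i$, and $H$ is reduced and normal away from finitely many points. Since everything is at least Cohen--Macaulay on a regular scheme, $H$ is a Cartier divisor on $A_n$.

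We then twist the sequence $0\to\mathcal O_{A_n}(-H)\to\mathcal O_{A_n}\to\mathcal O_H\to 0$ by $\omega_{A_n}(H)$ and use adjunction $\omega_H=\omega_{A_n}(H)|_H$. This gives
\[
0\to\omega_{A_n}\to\omega_{A_n}(H)\to\omega_H\to 0,
\qquad \omega_{A_n}(H)\cong\omega_{A_n}(-D)\otimes\varphi^*(\text{trivial}),
\]
because $\mathcal O(H)=\mathcal O(-D)$ via multiplication by $f$. By Lemma \ref{g-r}, $R^q\varphi_*\omega_{A_n}=0$ for $q>0$. It therefore suffices to show $R^q(\varphi|_H)_*\omega_H=0$ for $q>0$, with $H\to V(f)\subset A_0$ proper birational onto the image. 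For $q=N-1$, which is the top degree possible since the fibres have dimension $\le N-1$, we only need surjectivity on $R^{N-1}$, and that is automatic from right exactness.

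The main step is the vanishing for $H$ when $N=3$. Here $H$ is a two-dimensional scheme and the fibres of $H\to V(f)$ have dimension at most one, so only $R^1$ needs checking. If $H$ is normal (regular in codimension one is arranged by the generality of $f$, and $S_2$ comes from being a Cartier divisor on a regular scheme), we take a resolution $\mu:\tilde H\to H$ and the normalization $Z$ of $V(f)$. Lemma \ref{dim2} applied to $\tilde H\to Z$ and to $\tilde H\to H$ (each factor being a resolution of a normal surface) gives $R^1\mu_*\omega_{\tilde H}=0$ and $R\mu_*\omega_{\tilde H}\cong\mu_*\omega_{\tilde H}$. A Leray/trace argument then shows that the relevant $R^1$ into $Z$ is killed, using $\mu_*\omega_{\tilde H}\subset\omega_H$ with cokernel of finite support, which has no higher direct images.

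I expect the main obstacle to be justifying the genericity, namely that a general $f\in I$ produces a normal strict transform $H$ in mixed or positive characteristic, where Bertini is delicate. If that fails, the fallback is an induction on $n$ using the exact sequences $0\to\mathcal O(K-D')\to\mathcal O(K-D'+E)\to\mathcal O_E(K-D'+E)\to0$ on the exceptional divisors. Each $E_i$ is a blown-up $\mathbb P^2$ over a field, where vanishing can be checked directly by toric or rational surface arguments.
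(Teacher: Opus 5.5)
Your reduction matches the paper's: twist $0\to\omega_{A_n}\to\omega_{A_n}(H)\to\omega_H\to0$, kill the left term by Lemma~\ref{g-r}, and reduce to G--R vanishing for a two-dimensional $H$. Your handling of $N=2$, where $H$ is finite over its image once it contains no $E_i$, is fine, and differs from the paper's use of Koll\'ar's argument.

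The gap is the step you flag yourself: producing $f$ whose strict transform $H$ is normal. In positive characteristic this is a real obstruction, and enlarging the residue field does not cure it. Take $R=k[x,y,z]_{(x,y,z)}$ with $k$ perfect of characteristic $p$, and $I=(x^p,y^p,z^p)$. Then $I\oo_{A_1}=\oo_{A_1}(-pE_1)$, so $I$ is finitely supported. But every $k$-combination $ax^p+by^p+cz^p$ is the $p$-th power of a linear form, so $H$ is non-reduced.

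Passing to $R[t]_{\m R[t]}$ does not help. For general $c_1,c_2,c_3\in k(t)$, on the chart with coordinates $x$, $u=y/x$, $v=z/x$, the strict transform is $g=c_1+c_2u^p+c_3v^p=0$. Its differential over the perfect field $k$ is $dg=(\partial_tc_1+\partial_tc_2\,u^p+\partial_tc_3\,v^p)\,dt$. This vanishes along a curve in $H$, since the coordinate $x$ is free, so $H$ is not even $R_1$.

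For non-excellent $R$ in mixed characteristic there is no Bertini theorem to invoke at all. Even granting normality, your auxiliary resolution $\tilde H\to H$ would require excellence, which is not assumed. Your fallback via the exceptional divisors is only a sketch: the intermediate divisors are not nef. Making such an induction work is what Section~3 of the paper does, at the price of assuming rational centers.

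The missing idea is to adjoin one indeterminate per generator. With $I=(f_1,\dots,f_r)$, base change to the faithfully flat ring $R'=R[T_1,\dots,T_r]_{\m R[T_1,\dots,T_r]}$ and use the generic element $F=\sum_jT_jf_j$.

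1. Locally $I\oo_{A_n}=h\oo$ and $f_j=hg_j$. Since $h\in I\oo$, one gets $1=\sum_jb_jg_j$, so some $g_j$, say $g_1$, is a unit near a given point.
2. Then $B[T_1,\dots,T_r]/(\sum_jT_jg_j)\cong B[T_2,\dots,T_r]$. Thus $H=\operatorname{div}(F)-D'$ is regular outright, in every characteristic, with no Bertini argument.
3. Moreover $S=\spec R'/(F)$ is a hypersurface that is regular away from its closed point, hence a normal surface. The map $H\to S$ is already a resolution, so Lemma~\ref{dim2} applies to it directly.
4. The vanishing then descends from $R'$ to $R$ by flat base change and faithful flatness.
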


\begin{proof} If $\dim R=1$, every blow-up in the sequence is an isomorphism,
so the assertion is immediate.
When \(\dim R=2\), the argument of  \cite[Theorem 10.4]{kollar}, 
applied with the \(\varphi\)-nef divisor \(-D\), proves the conjecture. 
The excellence assumption imposed in \cite[\S 10.1]{kollar} is not needed for this argument.
 Indeed, the construction in \cite[Lemma 10.2]{kollar} and the resulting negative definiteness of the exceptional intersection
matrix do not require excellence. The remaining ingredients are adjunction,
Serre duality for proper curves over the residue field, and the theorem on formal
functions. Taking $L = \omega_Y$ and $N = \Delta_Y = 0$ gives $R^1\varphi_*\omega_Y = 0$. The higher direct
images in degrees greater than one vanish because the fibers have dimension at
most one.

Assume that $\dim A_0=3$. Put
$$
L=\mathcal{O}(-D)=I\mathcal{O}_{A_n},
\qquad I=(f_1,\ldots,f_r)\subset R.
$$
Let $\mathfrak{m}$ be the maximal ideal of $R$, $k:=R/\m$, and set
$$
R':=R[T_1,\ldots,T_r]_{\mathfrak{m}R[T_1,\ldots,T_r]}.
$$
The ring $R'$ has dimension $3$ and is faithfully flat over $R$.
If $\mathfrak{m}'\subset R'$ denotes its maximal ideal, then
$$
R'/\mathfrak{m}'=k(T_1,\ldots,T_r).
$$

Set
$$
A_n':=A_n\times_R R',\qquad L':=L|_{A_n'},
$$
where $L'$ is the pullback of $L$ to $A_n'$. Thus
$$
L'=\mathcal{O}_{A_n'}(-D'),
$$
where $D'$ is exceptional over $\operatorname{Spec}R'$.
Write $\varphi'$ for the morphism induced by $\varphi$ after this
base change.

Since $L$ is invertible on $A_n$, every point $P\in A_n$ has an
affine open neighborhood $U\subset A_n$ on which there exists
$h\in\Gamma(U, \mathcal{O}_U)$ such that
$$
L|_U=h\mathcal{O}_U.
$$
We can therefore write
$$
f_j|_U=h g_j,\qquad
 g_j\in\Gamma(U,\mathcal{O}_{A_n}).
$$
Since $h\in(f_1,\ldots,f_r)\mathcal{O}_U$, we can write
$$
h=\sum_{j=1}^r b_jf_j
=h\sum_{j=1}^r b_jg_j.
$$
Working in an integral domain and canceling $h$, we obtain
$$
1=\sum_{j=1}^r b_jg_j.
$$
Thus some $g_j$ is a unit in the local ring at $P\in U$.
After replacing $U$ by a sufficiently small affine open neighborhood
of $P$, we have
$$
g_j\in\Gamma(U,\mathcal{O}_{A_n})^*.
$$
On the open subset $U'\subset A_n'$ corresponding to $U$, we have
$$
L'|_{U'}=h\mathcal{O}_{U'}.
$$

Define
$$
F:=\sum_{j=1}^r T_jf_j
\in R[T_1,\ldots,T_r]
\hookrightarrow
R[T_1,\ldots,T_r]_{\mathfrak{m}R[T_1,\ldots,T_r]}=:R'.
$$
Since
$
F\in IR',
$
$F$ defines a global section of $L'$. Locally on $U'$, we have
$$
F|_{U'}=h\sum_{j=1}^r T_jg_j.
$$
The hypersurface of $A'_n$ defined by $\sum_{j=1}^r T_jg_j$ is regular on $U'$.
To see this, suppose that $g_1$ is a unit on $U\ni P$, and put
$$
B:=\Gamma(U,\mathcal{O}_{A_n}).
$$
Then
$$
\operatorname{Spec}\left(
B[T_1,\ldots,T_r]\Big/\Bigl(\sum_{j=1}^r T_jg_j\Bigr)
\right)
\simeq\operatorname{Spec}B[T_2,\ldots,T_r],
$$
which is regular.
As the local ring of a point on $U'$ is a localization of this regular ring, therefore it is  a regular local ring.
We have
$$
\operatorname{div}_{A_n'}F=D'+H\sim 0,
$$
where $D'$ is locally defined by $h$ and $H$ is locally defined by
$\sum_{j=1}^r T_jg_j$. Hence
$$
\mathcal{O}_{A_n'}(H)
\simeq\mathcal{O}_{A_n'}(-D')=L'.
$$
Set
$$
S:=\operatorname{Spec}(R'/(F)),
\qquad\rho:H\longrightarrow S,
$$
where $\rho$ is the restricted morphism. It is birational and is an
isomorphism outside the closed point. The scheme $S$ is a hypersurface
that is regular away from the closed point. Thus $S$ satisfies $S_2$ and $R_1$, and
is a normal surface.

Since $L'\simeq\mathcal{O}_{A_n'}(H)$, we have the exact sequence
$$
0\longrightarrow\omega_{A_n'}
\longrightarrow\omega_{A_n'}\otimes L'
\longrightarrow\omega_H\longrightarrow 0.
$$
For $q>0$, the associated higher direct image sequence contains
$$
R^q\varphi'_*\omega_{A_n'}
\longrightarrow
R^q\varphi'_*\bigl(\omega_{A_n'}\otimes L'\bigr)
\longrightarrow R^q\rho_*\omega_H.
$$
The left-hand term vanishes by Lemma~\ref{g-r} and flat base change.
Since $\dim H=2$, the right-hand term vanishes by Lemma~\ref{dim2}.
It follows that
$$
R^q\varphi'_*\bigl(\omega_{A_n'}\otimes L'\bigr)=0
\qquad(q>0).
$$
By flat base change (\cite[III, Proposition 9.3]{ha}), tensoring with $R'$ commutes with the
higher direct images in question. 
Since $R'$ is faithfully flat
over $R$, the vanishing after tensoring with $R'$ implies
$$
R^q\varphi_*\bigl(\omega_{A_n}\otimes L\bigr)
=R^q\varphi_*\omega_{A_n}(-D)=0
\qquad(q>0).
$$
This proves the theorem.

\end{proof}

\begin{rem} The proof of the theorem depends on $\dim S=2$ so that a resolution $H\to S$ of any normal surface $S$ satisfies G-R vanishing theorem.
When $\dim S\geq 3$, it is well known that G-R vanishing theorem does not hold in general.
For example, a resolution of a cone over Raynaud surface (\cite{ray}) does not satisfy G-R vanishing.
However, as the variety is not Cohen-Macaulay, it does not arise as  $S$ in our discussion, since our $S$ is a hypersurface.
\end{rem}

\begin{rem} In dimension three, G--R vanishing holds for excellent Cohen--Macaulay schemes of klt type by \cite[Corollary 1.4]{bkr}.
 For a G--R vanishing theorem on three-dimensional excellent rational singularities, see \cite[Theorem A]{aj}. 
 However, our construction does not guarantee either type of singularity.    
    
    For example, let 
    $$R:=k[x_0,\ldots,x_d]_{(x_0,\ldots,x_d)},\quad I=\m^e, \quad e>d\geq 2,$$
     where $\m\subset R$ is the maximal ideal.
     Let  
    $A_0=\spec R$.
    Consider the blow-up $\varphi_1: A_1\to A_0$.
    Let $f_1,\ldots f_r$ be  all the degree $e$ monomials in a regular system of parameters,
    and define 
    $$F=\sum_i f_iT_i.$$
    In this case, $D=eE_1$, $D'=eE'_1$, and $H\to S$ is the blow-up of the localization of the affine cone $S$ at its vertex over the
    smooth hypersurface of degree $e$ in $\PP_{k'}^d$,
    where $k'=k(T_1,\ldots, T_r)$. 
    Therefore, $S$ is not of klt type or rational.
    However, we can verify that this $H\to S$ still satisfies G-R vanishing.

\end{rem}

\vskip1truecm
\section{Vanishing for blow-ups at rational closed points}

\noindent
In this section, we prove the following:

\begin{thm}[KVV for a sequence of blow-ups at closed points]\label{kvv} 
Let $R$ be a regular local ring with  residue field $k$  of characteristic $p>0$.
Set $A_0:=\spec R$ and $N:=\dim R\geq 2$, and assume that $p\geq N-1$.
Let
$$
A_n\xrightarrow{\varphi_n}A_{n-1}\xrightarrow{\varphi_{n-1}}\cdots
\xrightarrow{\varphi_2}A_1\xrightarrow{\varphi_1}A_0
$$
be a sequence in which each $\varphi_i$ is the blow-up at a $k$-rational closed point $P_i\in A_{i-1}$, and set
$$
\varphi:=\varphi_1\circ\cdots\circ\varphi_n:A_n\longrightarrow A_0.
$$
For each $i$, let $E_i\subset A_i$ be the exceptional divisor of $\varphi_i$, and denote its strict transform on $A_j $ for $j>i$ by the same symbol $E_i$.
Let
$$
L=\sum_{i=1}^n a_iE_i
$$
be a $\varphi$-nef $\mathbb Q$-divisor on $A_n$.
Then
\begin{equation}\label{vanish}
R^q\varphi_*\mathcal O_{A_n}\bigl(K_{A_n}+\lceil L\rceil\bigr)=0
\qquad\text{for every }q>0,
\end{equation}
where 
$$
\lceil L\rceil:=\sum_{i=1}^n\lceil a_i\rceil E_i.
$$
\end{thm}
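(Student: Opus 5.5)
We prove \eqref{vanish} by induction on $n$. The step is to push down along the last blow-up $\varphi_n$ and then apply the induction hypothesis to $\psi:=\varphi_1\circ\cdots\circ\varphi_{n-1}$. The case $n=0$ is trivial.

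\emph{Reduction to the last blow-up.} Write $\lceil L\rceil=\varphi_n^*M+bE_n$, where $M=\sum_{i<n}\lceil a_i\rceil E_i$ is viewed on $A_{n-1}$ and $b\in\mathbb Z$; here $b$ is $\lceil a_n\rceil$ minus the multiplicity of $P_n$ on $\sum_{i<n}\lceil a_i\rceil E_i$. We also have $K_{A_n}=\varphi_n^*K_{A_{n-1}}+(N-1)E_n$. The projection formula gives
\[
R^q\varphi_{n*}\mathcal O_{A_n}(K_{A_n}+\lceil L\rceil)\simeq \mathcal O_{A_{n-1}}(K_{A_{n-1}}+M)\otimes R^q\varphi_{n*}\mathcal O_{A_n}((N-1+b)E_n).
\]
Since $P_n$ is $k$-rational, $E_n\simeq\PP^{N-1}_k$ and $\mathcal O_{E_n}(E_n)=\mathcal O(-1)$. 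By the standard sequences $0\to\mathcal O((m-1)E_n)\to\mathcal O(mE_n)\to\mathcal O_{\PP^{N-1}}(-m)\to0$ and the theorem on formal functions, $R^q\varphi_{n*}\mathcal O(mE_n)=0$ for $q>0$ whenever $m\le N-1$. We have to check that $m=N-1+b\le N-1$, i.e.\ $b\le 0$.

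\emph{Using nefness.} Let $\ell\subset E_n\simeq\PP^{N-1}$ be a general line. Then $L\cdot\ell\ge0$. Since $\varphi_n^*(\cdot)\cdot\ell=0$ and $E_n\cdot\ell=-1$, this gives $a_n\le -\mathrm{mult}_{P_n}(\sum_{i<n}a_iE_i)$. Rounding up then needs care, and this is where I expect the main obstacle. The multiplicity of $\sum_{i<n}\lceil a_i\rceil E_i$ at $P_n$ is $\sum_{i<n,\,P_n\in E_i}\lceil a_i\rceil$. It can exceed $\lceil\sum a_i\rceil$ by up to $s-1$, where $s$ is the number of $E_i$ through $P_n$; since the exceptional divisors form an snc configuration at the regular point $P_n$, $s\le N$. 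So $b$ may be positive, bounded by roughly $s-1\le N-1$. In that case the condition $m\le N-1$ fails. The fallback is the Kodaira-type vanishing $H^q(\PP^{N-1},\mathcal O(-m))=0$ for $0<q<N-1$, together with control of $H^{N-1}$, which vanishes precisely when $m\le N-1$. This suggests a different bookkeeping: instead of splitting $\lceil L\rceil$ as above, decompose $K_{A_n}+\lceil L\rceil$ as $\varphi_n^*(K_{A_{n-1}}+\lceil L'\rceil)+cE_n$. Here $L'=\varphi_{n*}L$ is $\psi$-nef, because pushing forward a nef exceptional divisor under a point blow-up stays nef: $L=\varphi_n^*L'+a'E_n$ with $a'\le0$. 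With this decomposition the discrepancy of rounding is absorbed, and one checks $c\le N-1$ using $\lceil x+y\rceil\le\lceil x\rceil+\lceil y\rceil$ in the right direction and $a'\le 0$.

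\emph{The role of $p\ge N-1$.} Here $p$ enters: to run the induction one needs vanishing on the intermediate fibers as well. Via a Leray spectral sequence and the formal-functions theorem, this reduces to Kawamata–Viehweg/Kodaira vanishing on the exceptional divisors of the composite over $P_i$. These divisors are iterated blow-ups of $\PP^{N-1}_k$ at rational points, i.e.\ smooth rational varieties of dimension $N-1$ that lift to $W_2(k)$. Deligne–Illusie–Raynaud type vanishing (valid for $\dim\le p$, here $N-1\le p$) then gives the needed vanishing for $\QQ$-divisors with snc fractional part. Once the vanishing on fibers is in hand, the Leray spectral sequence for $\varphi=\psi\circ\varphi_n$ yields $R^q\varphi_*=R^q\psi_*\varphi_{n*}$. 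Finally, \eqref{vanish} follows from the induction hypothesis applied to $L'$ on $A_{n-1}$.
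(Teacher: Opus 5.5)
There is a genuine gap, though not where you locate it.

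First, your nefness inequality has the wrong sign. The strict transform of each $E_i\ni P_n$ satisfies $E_i\cdot\ell=1$, so $L\cdot\ell\ge0$ gives $a_n\le\sum_{i<n,\,P_n\in E_i}a_i$. Then $\lceil a_n\rceil\le\lceil\sum a_i\rceil\le\sum\lceil a_i\rceil$ yields $b\le0$ always. Thus $R^q\varphi_{n*}\oo_{A_n}(K_{A_n}+\lceil L\rceil)=0$ for $q>0$ holds unconditionally, in any characteristic, and $L'=\varphi_{n*}L$ is indeed $\psi$-nef.

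What fails is the zeroth direct image. With $c=N-1+b$ one has $\varphi_{n*}\oo_{A_n}(K_{A_n}+\lceil L\rceil)=\oo_{A_{n-1}}(K_{A_{n-1}}+\lceil L'\rceil)\otimes\m_{P_n}^{\max(0,-c)}$, and $c<0$ does occur. For example, take $N=2$, $P_2\in E_1$ and $L=-3E_1-5E_2$ on $A_2$. Then $L\cdot E_1=1$ and $L\cdot E_2=2$, but $K_{A_2}+\lceil L\rceil=\varphi_2^*(-2E_1)-E_2$, whose push-forward is $\oo_{A_1}(-2E_1)\otimes\m_{P_2}$. Such ideal-twisted sheaves are not covered by your induction hypothesis.

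Write $\mathcal G=\oo_{A_{n-1}}(K_{A_{n-1}}+\lceil L'\rceil)$ and use $R^{>0}\psi_*\mathcal G=0$. What remains is the surjectivity of $H^0(A_{n-1},\mathcal G)\to\mathcal G\otimes\oo_{A_{n-1}}/\m_{P_n}^{-c}$, a jet-separation statement. It does not follow from vanishing for $L'$, which has forgotten $a_n$, i.e.\ the extra positivity of $L$ on strict transforms of curves through $P_n$. That nothing in your first two paragraphs uses $p\ge N-1$ is a symptom: the characteristic-dependent content must enter exactly here.

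Your third paragraph does not supply this. It does not say which sheaves on which exceptional divisors need vanishing. It does not explain why they have the form $K_E+\lceil M\rceil$ with $M$ ample and fractional part on the boundary. Nor does it say how such vanishings assemble into $R^q\varphi_*=0$.

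The paper does not push down along $\varphi_n$ at all. Following Arvidsson, for $\varphi$-ample $L$ it sets $\Delta=\lceil L\rceil-L$ and picks an integral exceptional $G$ with full support and $-G$ $\varphi$-ample. It then repeatedly adds $\mu_jG$ until some coefficient reaches $1$, and subtracts that $E_{i_j}$. This yields exact sequences $0\to\oo_{A_n}(D_{j+1})\to\oo_{A_n}(D_j)\to\oo_{E_{i_j}}(D_j|_{E_{i_j}})\to0$ with $D_j|_{E_{i_j}}=K_{E_{i_j}}+\lceil M_j\rceil$, where $M_j$ is ample and $\langle M_j\rangle$ is supported on $B_{i_j}$. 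Lemma~\ref{on-exceptional} kills their higher cohomology, via a log lifting of $(E_i,B_i)$ to $W_2(k)$ and a logarithmic Akizuki--Nakano-type vanishing that needs $N-1\le p$. Relative Serre vanishing handles $j\gg0$, and descending induction finishes. The nef case follows by replacing $L$ with $L-\varepsilon G$.

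Finally, the $W_2(k)$ argument needs $k$ perfect. The paper arranges this by faithfully flat base change to a complete regular local ring with residue field $\bar k$; your sketch omits this reduction.
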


We first prove the theorem under the additional assumptions that \(R\) is excellent and \(k\) is perfect. 
We then deduce the general case by faithfully flat base change.

\begin{lem}\label{on-exceptional} Retain the notation and hypotheses of Theorem \ref{kvv}, 
and assume in addition that \(k\) is perfect.
Fix $i\in\{1,\ldots,n\}$, and regard $E_i$ as its strict transform on $A_n$. Set
$$
B_i:=\left(\left.\sum_{j\neq i}E_j\right|_{E_i}\right)_{\mathrm{red}}.
$$
Let $L_i$ be an ample $\QQ$-divisor on $E_i$, and assume that
$$
\operatorname{Supp}\langle L_i\rangle
\subseteq
\operatorname{Supp}B_i,
\qquad
\langle L_i\rangle:=L_i-\lfloor L_i\rfloor.
$$
Then
$$
H^q\left(
E_i,
\oo_{E_i}\left(K_{E_i}+\lceil L_i\rceil\right)
\right)=0
$$
for every $q>0$.
\end{lem}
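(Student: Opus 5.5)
We describe the strict transform $E_i$ concretely, then deduce the vanishing from a known vanishing for a log pair on a toric-like variety. Since $P_i$ is a $k$-rational point of the regular scheme $A_{i-1}$, the exceptional divisor of $\varphi_i$ is $\PP^{N-1}_k$. Its strict transform $E_i\subset A_n$ arises from $\PP^{N-1}_k$ by the later blow-ups $\varphi_j$ ($j>i$) whose centers $P_j$ lie on the strict transform of $E_i$. Each such blow-up restricts on $E_i$ to the blow-up of a $k$-rational point of a smooth $(N-1)$-dimensional projective $k$-variety; otherwise $E_i$ is unchanged. So $E_i$ is a smooth projective variety over $k$, obtained from $\PP^{N-1}_k$ by a sequence of point blow-ups at $k$-rational points. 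We will also check that $B_i$ is a reduced divisor on $E_i$ whose components are the restrictions of the $E_j$. These are the hyperplane $E_{j}\cap E_i$ coming from an earlier $E_j$ through $P_i$, and the exceptional divisors of the induced point blow-ups on $E_i$. We expect $(E_i,B_i)$ to be log smooth, or at least to have the property that $B_i$ consists of exceptional divisors of point blow-ups together with strict transforms of hyperplanes. We will verify this by induction on $n$ using the simple normal crossing property of $\sum E_j$ on $A_n$.

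Next we reduce to a statement on $E_i$ alone. Write $L_i=\lfloor L_i\rfloor+\langle L_i\rangle$ with $\operatorname{Supp}\langle L_i\rangle\subseteq\operatorname{Supp}B_i$. Then $\lceil L_i\rceil=L_i+\Delta$ with $\Delta=\lceil L_i\rceil-L_i$, an effective $\QQ$-divisor with coefficients in $[0,1)$ and support in $B_i$. Hence
$$
K_{E_i}+\lceil L_i\rceil \equiv K_{E_i}+\Delta+L_i,
$$
which is a KVV-type statement for the pair $(E_i,\Delta)$ with $L_i$ ample. In characteristic $p$ we cannot invoke KVV directly. The plan is to use Frobenius-splitting-type or lifting arguments that are available because $E_i$ comes from $\PP^{N-1}$ by rational point blow-ups. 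The concrete tool I would use is Deligne--Illusie/Hara type vanishing: if $(X,D)$ is a log smooth pair that lifts to $W_2(k)$ with $D$ lifting, and $\dim X\le p$, then $H^q(X,\mathcal O(K_X+\lceil L\rceil))=0$ for ample $L$ with fractional part supported in $D$ (Hara's theorem, or Kawamata--Viehweg in characteristic $p$ under lifting). The hypothesis $\dim E_i=N-1\le p$ is exactly where $p\ge N-1$ enters.

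So the key step is to show that $(E_i,B_i)$ lifts to $W_2(k)$, and indeed to $W(k)$; perfectness of $k$ is needed for $W(k)$ to behave. First lift $\PP^{N-1}_k$ to $\PP^{N-1}_{W(k)}$. Then successively lift each $k$-rational blow-up center to a $W(k)$-section of the smooth lifted scheme, possible by smoothness and Hensel's lemma. Blowing up the lifted sections lifts the point blow-ups, and the exceptional divisors lift accordingly. The hyperplane components lift to the strict transforms of lifted hyperplanes, chosen compatibly with the lifted points. This compatibility is the main obstacle. When several points and hyperplanes are incident, or the boundary is not snc, one must choose the lifted sections so that all incidences (a point lying on a boundary component or an intersection of components) are preserved. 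I would handle this by lifting each center inside the lifted stratum (the intersection of lifted components) containing it. Each stratum is smooth over $W(k)$, so sections lift. If the boundary failed to be snc, I would replace Hara's theorem by its version for log resolutions, or pass to a log resolution by further rational blow-ups, which also lift. With the lift in hand, apply the vanishing for $q>0$, which gives the lemma.
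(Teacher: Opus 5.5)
Your proposal is correct and follows the same route as the paper. You lift $(E_i,B_i)$ starting from $\PP^{N-1}_k$ together with the coordinate hyperplanes cut out by the earlier $E_j$. You then lift each later $k$-rational center as a section of the smooth lifted stratum of the boundary components containing it; the paper does this over $W_2(k)$ by infinitesimal lifting, and your $W(k)$/Hensel version reduces to it. Finally you apply the Deligne--Illusie/Hara log Kawamata--Viehweg vanishing, which needs only $\dim E_i=N-1\le p$. Your hedges and the log-resolution fallback are unnecessary: $B_i$ is automatically simple normal crossings, being the restriction of the snc divisor $\sum_j E_j$ on $A_n$, as you yourself propose to check.
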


\begin{proof}
Put $d:=\dim E_i=N-1$. Since $A_n$ is obtained by successive blow-ups
at closed points, $E_i$ is a smooth projective variety, and $B_i$ is a
simple normal crossings divisor on $E_i$.
Although $R$ is not assumed to be essentially of finite type over a field,
each exceptional divisor $E_i$ is a smooth projective variety over $k$,
since it is obtained from $\mathbb P_k^{N-1}$ by successive blow-ups
at $k$-rational closed points.
All log smooth pairs and their liftings considered below are over $k$
and $W_2(k)$, respectively; no lifting of the ambient schemes $A_j$
is required.

We first show that the log smooth pair $(E_i,B_i)$ lifts to $W_2(k)$.
At the stage when $E_i$ is created, it is isomorphic to $\PP_k^d$, and
its intersections with the earlier exceptional divisors form a union
of coordinate hyperplanes. Hence this initial log smooth pair lifts to
the corresponding pair over $W_2(k)$.

We now proceed through the subsequent blow-ups. If the next center does
not lie on the current strict transform of $E_i$, the pair on $E_i$
remains unchanged. If the center lies on $E_i$, then the induced
morphism on $E_i$ is the blow-up at that closed point.

Assume inductively that the current log smooth pair has been lifted
over $W_2(k)$. We use the same notation $(E_i,B_i)$ and
$(\widetilde E_i,\widetilde B_i)$ for the current pair and its lifting.
Let $P_j\in E_i$, with $j>i$, be the next blow-up center lying on $E_i$.
Let $B_{i,1},\ldots,B_{i,r}$ be the irreducible components of $B_i$,
and denote their liftings by $\widetilde B_{i,\alpha}$.
After renumbering these components, we may assume that
$P_j\in B_{i,\alpha}$ precisely when $1\leq\alpha\leq s$.
Consider the locally closed subscheme
\[
\widetilde Z:=
\left(\bigcap_{\alpha=1}^{s}\widetilde B_{i,\alpha}\right)
\setminus
\left(\bigcup_{\alpha=s+1}^{r}\widetilde B_{i,\alpha}\right)
\subseteq\widetilde E_i,
\]
where the intersection is scheme-theoretic, and an empty intersection
is understood to be $\widetilde E_i$.
Since $\widetilde B_i$ is a relative simple normal crossings divisor,
$\widetilde Z$ is smooth over $W_2(k)$.
The $k$-rational point $P_j$ lies in its special fiber.
By the infinitesimal lifting property of smooth morphisms, it lifts
to a $W_2(k)$-valued point of $\widetilde Z$.
Denote the image of this section by $\tP_j$; thus
$\tP_j\simeq\operatorname{Spec}W_2(k)$ and
$(\tP_j)_k=P_j$.
By construction,
\[
\begin{cases}
\tP_j\subseteq\widetilde B_{i,\alpha}
  &\text{if }P_j\in B_{i,\alpha},\\[2pt]
\tP_j\cap\widetilde B_{i,\alpha}=\emptyset
  &\text{if }P_j\notin B_{i,\alpha}.
\end{cases}
\]
The inclusion in the first line is scheme-theoretic.
This is the precise sense in which the incidence relations between
the center and the boundary components are preserved.

Therefore, blowing up $\widetilde{E}_i$ along $\widetilde{P}_j$
produces a smooth scheme over $W_2(k)$, and the strict transforms
of the boundary components, together with the inverse image of
$\widetilde{P}_j$, form a relative simple normal crossings divisor.
The resulting pair specializes to the pair obtained by blowing up
$E_i$ at $P_j$ and taking the new boundary induced by the ambient
exceptional divisors. Continuing inductively, we obtain a log smooth lifting
$(\widetilde E_i,\widetilde B_i)$ of the final pair $(E_i,B_i)$
over $W_2(k)$.

Let
$$
S_i:=\operatorname{Supp}\langle L_i\rangle.
$$
By assumption, $S_i$ is a union of components of $B_i$. Hence the
corresponding union of components of $\widetilde B_i$ gives a log
smooth lifting of $(E_i,S_i)$ to $W_2(k)$.

Applying \cite[Theorem~5.1(ii)]{m-o} to the ample $\QQ$-divisor $L_i$,
we obtain
$$
H^a\left(
E_i,
\Omega_{E_i}^b(\log S_i)\otimes
\oo_{E_i}\left(-\lceil L_i\rceil\right)
\right)=0
$$
whenever
$$
a+b<\min\{d,p\}.
$$
Since $p\geq N-1=d$, taking $b=0$ gives
$$
H^a\left(
E_i,
\oo_{E_i}\left(-\lceil L_i\rceil\right)
\right)=0
\qquad
\text{for every }0\leq a<d.
$$
By Serre duality, for every $1\leq q\leq d$,
$$
H^q\left(
E_i,
\oo_{E_i}\left(K_{E_i}+\lceil L_i\rceil\right)
\right)^\vee
\simeq
H^{d-q}\left(
E_i,
\oo_{E_i}\left(-\lceil L_i\rceil\right)
\right)=0.
$$
The assertion follows.
\end{proof}


\vskip.5truecm
\noindent
{
\it Proof of Theorem \ref{kvv}.}
Assuming that $R$ is excellent and $k$ is perfect, we prove the vanishing by adapting the proof of  \cite[Proposition~8]{arv}.
In this setting, the following hypotheses in that proposition are satisfied:
\begin{enumerate}
\item[$\bullet$] $R$ is an excellent local domain;
\item[$\bullet$] $(A_n,\operatorname{Exc}(\varphi))$ is dlt, where $\operatorname{Exc}(\varphi)$ denotes the reduced exceptional divisor;
\item[$\bullet$] each $E_i$ is a $\QQ$-Cartier divisor; and
\item[$\bullet$] every reflexive rank-one sheaf on $A_n$ is $S_3$.
\end{enumerate}
Indeed, $A_n$ is regular, and its reduced exceptional divisor is a simple normal crossings divisor.
Arvidsson also assumes that the induced morphism $E_i\to\spec k$ satisfies KVV for every $i$.
For our purpose, however, it is enough to verify the particular KVV statements that occur in the proof of \cite[Proposition~8]{arv}; Lemma~\ref{on-exceptional} gives precisely these statements.
We recall the relevant part of that proof to make this point explicit.

We put $B=0$ in the notation of \cite{arv}.
First assume that $L$ is a $\varphi$-ample exceptional $\QQ$-divisor on $A_n$.

Put
$$
\Delta:=\lceil L\rceil-L.
$$
Then $\Delta$ is an effective $\varphi$-exceptional $\QQ$-divisor whose coefficients are strictly smaller than $1$.

Let $\Delta_0:=\Delta$ and $D_0:=K_{A_n}+\Delta_0+L=K_{A_n}+\lceil L\rceil$.
We prove
$$
R^q\varphi_*\oo_{A_n}(D_0)=0
\qquad\text{for }q>0.
$$
Choose an effective integral $\varphi$-exceptional divisor $G$ such that
$$
\supp G=\bigcup_{i=1}^n E_i
\qquad\text{and}\qquad
-G\text{ is }\varphi\text{-ample}.
$$
Take $\mu_0>0$ so that every coefficient of $\Delta_0+\mu_0G$ is at most $1$ and at least one coefficient is equal to $1$.
Choose $i_0$ such that the coefficient of $E_{i_0}$ in $\Delta_0+\mu_0G$ is $1$, and define
$$
\Delta_1:=\Delta_0+\mu_0G-E_{i_0},
\qquad
D_1:=D_0-E_{i_0}.
$$
Then $E_{i_0}$ is not contained in $\supp\Delta_1$.
Since
$$
D_0=K_{A_n}+E_{i_0}+\Delta_1+L-\mu_0G,
$$
we obtain the exact sequence
$$
0\to \oo_{A_n}(D_1)\to \oo_{A_n}(D_0)
\to \oo_{E_{i_0}}(D_0|_{E_{i_0}})\to 0.
$$
Choosing $K_{E_{i_0}}$ compatibly with adjunction, we have
$$
D_0|_{E_{i_0}}
=
K_{E_{i_0}}+\Delta_1|_{E_{i_0}}+L_1,
$$
where
$$
L_1:=(L-\mu_0G)|_{E_{i_0}}
$$
is an ample $\QQ$-divisor on $E_{i_0}$, and
$$
\supp(\Delta_1|_{E_{i_0}})\subseteq\supp B_{i_0}.
$$
Choose a sufficiently small rational number $0<\varepsilon_0\ll1$ such that
$$
L_1+\varepsilon_0\Delta_1|_{E_{i_0}}
$$
is ample. Since $D_0|_{E_{i_0}}-K_{E_{i_0}}$ is integral and
$$
L_1+\varepsilon_0\Delta_1|_{E_{i_0}}
=
D_0|_{E_{i_0}}-K_{E_{i_0}}
-(1-\varepsilon_0)\Delta_1|_{E_{i_0}},
$$
we have
$$
\left\lceil L_1+\varepsilon_0\Delta_1|_{E_{i_0}}\right\rceil
=
D_0|_{E_{i_0}}-K_{E_{i_0}},
$$
and
$$
\supp\left\langle L_1+\varepsilon_0\Delta_1|_{E_{i_0}}\right\rangle
\subseteq
\supp B_{i_0}.
$$
Lemma~\ref{on-exceptional} therefore gives
$$
H^q\left(E_{i_0},\oo_{E_{i_0}}(D_0|_{E_{i_0}})\right)=0
\qquad(q>0).
$$

We now repeat the same procedure. For each $j\geq1$, choose $\mu_j\geq0$ and an index $i_j$ so that every coefficient of $\Delta_j+\mu_jG$ is at most $1$ and the coefficient of $E_{i_j}$ is equal to $1$, and set
$$
\Delta_{j+1}:=\Delta_j+\mu_jG-E_{i_j},
\qquad
D_{j+1}:=D_j-E_{i_j}.
$$
Together with the first step, we obtain exact sequences, for every $j\geq0$,
$$
0\to \oo_{A_n}(D_{j+1})\to \oo_{A_n}(D_j)
\to \oo_{E_{i_j}}(D_j|_{E_{i_j}})\to 0,
$$
and
$$
D_{j+1}-(K_{A_n}+\Delta_{j+1})
=
D_j-(K_{A_n}+\Delta_j)-\mu_jG.
$$
Thus
$$
D_j=K_{A_n}+\Delta_j+L-\lambda_jG,
$$
where $\lambda_0=0$ and $\lambda_{j+1}:=\lambda_j+\mu_j$.
Choosing $K_{E_{i_j}}$ compatibly with adjunction, at the $j$-th step we have
$$
D_j|_{E_{i_j}}
=
K_{E_{i_j}}+\Delta_{j+1}|_{E_{i_j}}
+(L-\lambda_{j+1}G)|_{E_{i_j}}.
$$
The last term is ample, and
$$
\supp(\Delta_{j+1}|_{E_{i_j}})\subseteq\supp B_{i_j}.
$$
Exactly as above, after adding a sufficiently small positive multiple of $\Delta_{j+1}|_{E_{i_j}}$ to the ample divisor $(L-\lambda_{j+1}G)|_{E_{i_j}}$, Lemma~\ref{on-exceptional} yields
$$
H^q\left(
E_{i_j},
\oo_{E_{i_j}}(D_j|_{E_{i_j}})
\right)=0
\qquad(q>0).
$$
The final part of the proof of \cite[Proposition~8]{arv} applies without change and gives
$$
R^q\varphi_*\oo_{A_n}(D_j)=0
\qquad(q>0)
$$
for $j\gg0$.
The exact sequences above then give, by descending induction,
$$
R^q\varphi_*\oo_{A_n}(D_j)=0
\qquad(q>0)
$$
for every $j$. In particular, for a $\varphi$-ample exceptional $\QQ$-divisor $L$,
$$
R^q\varphi_*\oo_{A_n}(K_{A_n}+\lceil L\rceil)
=
R^q\varphi_*\oo_{A_n}(D_0)=0
\qquad(q>0).
$$

Now let $L$ be a $\varphi$-nef $\QQ$-divisor supported on the exceptional divisor.
For a sufficiently small rational number $\varepsilon>0$, the divisor
$L-\varepsilon G$ is $\varphi$-ample and
$$
\lceil L-\varepsilon G\rceil=\lceil L\rceil.
$$
Therefore, the desired vanishing for a $\varphi$-nef divisor reduces to the $\varphi$-ample case proved above.
This completes the proof for an excellent $R$ with the perfect residue field $k$.

We now treat the general case.
Let $K$ be an algebraic closure of $k=R/\mathfrak m$.
By \cite[(6.8.2)]{ega}, there exists a flat local homomorphism
$R\to Q_0$ such that $\mathfrak mQ_0$ is the maximal ideal
of $Q_0$ and
\[
Q_0/\mathfrak mQ_0\simeq K.
\]
Set
\[
Q:=\varprojlim_r Q_0/\mathfrak m^rQ_0.
\]
Since $\mathfrak mQ_0$ is finitely generated and
$Q_0/\mathfrak mQ_0$ is Noetherian, $Q$ is a Noetherian
complete local ring by \cite[Tag05GH]{sta}.
 Moreover,
\[
Q/\mathfrak m^rQ\simeq Q_0/\mathfrak m^rQ_0
\qquad(r\ge1);
\]
see \cite[Tag05GG]{sta}.
Each of these quotients is flat over $R/\mathfrak m^r$.
For every ideal $\mathfrak a\subset R$, the $Q$-module
$\mathfrak a\otimes_R Q$ is finitely generated and hence
$\mathfrak mQ$-adically separated by Krull's intersection theorem.
Thus $Q$ is $\mathfrak m$-adically ideal-separated as an $R$-module.
Applying $(5)\Rightarrow(1)$  in \cite[Theorem~22.3]{mat} with
$A=R$, $I=\mathfrak m$, and $M=Q$, we conclude that $R\to Q$ is flat.
Since this homomorphism is local, it is faithfully flat.
Let $x_1,\ldots,x_N$ be a regular system of parameters of $R$.
Their images form a regular sequence in $Q$, by flatness,
and generate its maximal ideal:
\[
\mathfrak mQ=(x_1,\ldots,x_N)Q.
\]
Thus $Q$ is a regular local ring of dimension $N$.
Being complete, it is excellent, and its residue field $K$
is perfect.

For each $i$, set
\[
A_i':=A_i\times_{\operatorname{Spec}R}\operatorname{Spec}Q.
\]
Since every center $P_i$ is $k$-rational, its base change is
\[
P_i':=P_i\times_{\operatorname{Spec}R}\operatorname{Spec}Q
\simeq\operatorname{Spec}K.
\]
Blow-ups commute with flat base change. Consequently,
\[
A_n'\longrightarrow A_{n-1}'\longrightarrow\cdots
\longrightarrow A_0'=\operatorname{Spec}Q
\]
is a sequence of blow-ups at $K$-rational closed points.
Let $\varphi':A'_n\to A'_0$ be the composite of these blow-ups.
Let $\rho_n:A_n'\to A_n$ be the projection, and put
\[
L':=\rho_n^*L=\sum_i a_iE_i'.
\]
Then $L'$ is $\varphi'$-nef and
\[
\lceil L'\rceil=\rho_n^*\lceil L\rceil.
\]
Choose canonical divisors compatibly, starting with the zero
divisor on each regular local base and using the canonical
divisor formula for a blow-up at a closed point. This gives
\[
K_{A_n'}=\rho_n^*K_{A_n},
\]
and hence
\[
\rho_n^*\mathcal O_{A_n}(K_{A_n}+\lceil L\rceil)
\simeq
\mathcal O_{A_n'}(K_{A_n'}+\lceil L'\rceil).
\]
The case already proved for excellent regular local rings
with perfect residue fields therefore yields
\[
R^q\varphi'_*
\mathcal O_{A_n'}(K_{A_n'}+\lceil L'\rceil)=0
\qquad(q>0).
\]
Let $\rho:\operatorname{Spec}Q\to\operatorname{Spec}R$
be the induced morphism. Flat base change gives
\[
\rho^*R^q\varphi_*
\mathcal O_{A_n}(K_{A_n}+\lceil L\rceil)
\simeq
R^q\varphi'_*
\mathcal O_{A_n'}(K_{A_n'}+\lceil L'\rceil)
=0.
\]
Since $\rho$ is faithfully flat, we conclude that
\[
R^q\varphi_*\mathcal O_{A_n}(K_{A_n}+\lceil L\rceil)=0
\qquad(q>0).
\]
$\Box$

\begin{cor} Lipman's vanishing conjecture holds if the residue field $k$ of $R$ satisfies
  $\dim R-1\leq p:=\ch k$, and
each blow-up center $P_i$ is a $k$-rational point.

\end{cor}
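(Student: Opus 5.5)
The corollary will follow by applying Theorem \ref{kvv} to the integral divisor $L:=-D$. Let $I\subset R$ be finitely supported, with a sequence of blow-ups $\varphi:A_n\to A_0$ at closed points such that $I\mathcal O_{A_n}=\mathcal O_{A_n}(-D)$, where $D=\sum a_iE_i$ and $a_i\ge 0$. By hypothesis each center $P_i$ is $k$-rational and $p\ge N-1$. If $N=1$, every blow-up is an isomorphism and there is nothing to prove, so assume $N\ge 2$. Then all hypotheses of Theorem \ref{kvv} on $R$ and on the sequence hold.

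It remains to check that $L:=-D=\sum(-a_i)E_i$ is a $\varphi$-nef divisor supported on the exceptional divisors. Its support is clear. For nefness, note that $\mathcal O_{A_n}(-D)=I\mathcal O_{A_n}$ is generated by the global sections $f_1,\dots,f_r$, where $I=(f_1,\dots,f_r)$: these sections generate the invertible sheaf at every point. Let $C\subset A_n$ be a curve contracted by $\varphi$. For each point $x\in C$ some $f_j$ generates $\mathcal O(-D)$ at $x$, so the restriction of that $f_j$ to $C$ is a nonzero section of $\mathcal O(-D)|_C$. Hence $\deg(\mathcal O(-D)|_C)\ge 0$, which says $-D\cdot C\ge 0$. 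Thus $-D$ is $\varphi$-nef.

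Since $L$ is integral, $\lceil L\rceil=L=-D$. Theorem \ref{kvv} therefore gives
\[
R^q\varphi_*\mathcal O_{A_n}(K_{A_n}-D)=0\qquad(q>0),
\]
which is exactly Lipman's vanishing. The canonical divisors in the conjecture and in Theorem \ref{kvv} are chosen by the same compatible convention, starting from the zero divisor on $A_0$.

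The only point needing care is the nefness of $-D$, which rests on global generation by the pullbacks of the generators of $I$. Everything else is a direct specialization of Theorem \ref{kvv}.
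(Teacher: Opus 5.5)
Your proof is correct and follows the paper's argument. You show that $-D$ is $\varphi$-nef because $\mathcal O_{A_n}(-D)=I\mathcal O_{A_n}$ is generated by the pullbacks of generators of $I$, then apply Theorem~\ref{kvv} to the integral divisor $L=-D$; your separate treatment of $N=1$, which also ensures $p\geq N-1\geq 1>0$ as Theorem~\ref{kvv} requires, makes explicit a small point the paper leaves implicit.
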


\begin{proof}
Set
$$
D:=\sum_{i=1}^n a_iE_i.
$$
The natural map
$$
\varphi^*I\longrightarrow I\mathcal O_{A_n}=\mathcal O_{A_n}(-D)
$$
is surjective.
Hence $\mathcal O_{A_n}(-D)$ is generated relative to $A_0$, so $-D$ is $\varphi$-nef.
Applying Theorem~\ref{kvv} to the integral $\varphi$-nef divisor $L=-D$, we obtain
$$
R^q\varphi_*\mathcal O_{A_n}(K_{A_n}-D)=0
\qquad(q>0).
$$

\end{proof}




\vskip1truecm

\noindent Shihoko Ishii, \\ Graduate School of Mathematical Sciences, University of Tokyo,\\
3-8-1, Komaba, Meguro-ku, Tokyo, Japan.\\
shihoko@g.ecc.u-tokyo.ac.jp

\end{document}